\newtheorem{theorem}{Theorem}[section]
\newtheorem{lemma}[theorem]{Lemma}
\newtheorem{corollary}[theorem]{Corollary}
\newtheorem{observation}[theorem]{Observation}
\title{The Catalan numbers have no forbidden residue modulo primes}
\author{Rob Burns}
\begin{document}
\maketitle
\begin{abstract}
Let $C_n$ be the $n$th Catalan number. For any prime $p \geq 5$ we show that the set $\{C_n : n \in \mathbb{N} \}$ contains all residues mod $p$. In addition all residues are attained infinitely often. Any positive integer can be expressed as the product of central binomial coefficients modulo $p$. The directed sub-graph of the automata for $C_n \mod p$ consisting of the constant states and transitions between them has a cycle which visits all vertices.
\end{abstract}

\section{Introduction}
The {\it Catalan numbers\/} are defined by
$$
C_n := \frac{1}{n+1}\binom{2n}{n}.
$$
This note is an addendum to our paper \cite{1701.02975}. In that paper we analysed the Catalan numbers modulo primes $\geq 5$ using automata. Refer to that paper and to \cite{RY2013} for details of how automata can be used to study Catalan numbers and other sequences. 

A set $S$ is said to have a forbidden residue $r$ modulo $p$ if no element of $S$ is $\; \equiv r \mod p$. We show below that the Catalan numbers have no forbidden residue modulo any prime. Garaeva, Luca and Shparlinski \cite{GLS2006} established this result for sufficiently large primes. They also showed that in a certain sense the distribution of $C_n \mod p$ amongst the non-zero residue classes is roughly equal. They also proved that the set \{$C_n: n \leq p^{13/2}(\log p)^6 \, \}$ already includes all residue classes modulo $p$. Our results do not say anything about how quickly $C_n$ covers all residue classes or about how often proportionally each residue class is attained. We do show that each residue class is attained infinitely often. The result for $C_n \mod p$ differs from the situation for powers of primes. Eu, Liu and Yeh \cite{Eu2008} showed that $3$ is a forbidden residue for $C_n$ modulo $4$ and $\{ \, 3, 7 \, \}$ are forbidden residues for $C_n \mod 8$. Liu and Yeh in \cite{Liu2010} calculated $C_n \mod 16$ and $\mod 64$ and thereby determined the forbidden residues in each case. They also showed that $C_n$ has forbidden residues $\mod 2^k$ for any $k$. Forbidden residues for $C_n$ modulo $\{ 32, 64, 128, 256, 512 \}$ were calculated by Rowland and Yassawi using automata in \cite{RY2013}. Kauers, Krattenthaler and M\"uller calculated the generating function for $C_n \mod 4096$ in terms of a special function and so, in theory,  could determine forbidden residues in this case. Similarly Krattenthaler and M\"uller \cite{KM2013} determined the generating function for $C_n \mod 27$ in terms of a special function.

Another difference between the $\mod p$ case and when higher powers of $p$ are involved is that in the $\mod p$ situation all residues are attained infinitely often. Rowland and Yassawi showed that some residues for $C_n \mod \{8, 16 \}$ are attained only finitely many times.

\bigskip

\section{Results}

Let $p$ be prime and let the set $S$ be defined as the multiplicative closure in $\biggr(\, \frac{\mathbb{Z}}{p \mathbb{Z}} \, \biggr)^{\times}$ of the set of elements
$$
\{ \, \binom{2d}{d} \mod p :  \, 0 \leq d \leq \frac{p-1}{2} \}. 
$$
All elements of $S$ are non-zero as $2d \leq p-1$ for $d \leq \frac{p-1}{2}$. We showed in \cite{1701.02975} that $S$ is contained in the set of constant states of the automaton for $C_n \mod p$ and hence that the elements of $S$ appear as residues of $ C_n \mod p$ for some $n$. In explanation of this remark, as shown in \cite{1701.02975}  we have for $d \leq \frac{p-1}{2}$
$$
\Lambda_{d, d} (\, 1*Q^{p-1} \,) = \binom{2d}{d}.
$$
Then if $c_1 = \binom{2d_1}{d_1}$ and $c_2 = \binom{2d_2}{d_2}$ we have
\bigskip
$$
c_1 c_ 2 = \Lambda_{d_1, d_1} \biggr( \; \Lambda_{d_2, d_2}  (\, 1*Q^{p-1} \,) \; \biggr).
$$
Therefore $c_1 c_2$ is also a constant state of the automaton for $C_n \mod p$ and therefore also a residue of $C_n \mod p$ for some $n$.

\bigskip 

\begin{lemma}
\label{lemma1}
The set $S$ contains all non-zero residues modulo $p$.
\end{lemma}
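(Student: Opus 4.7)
The first observation is that $S$, being a nonempty subset of the finite group $(\mathbb{Z}/p\mathbb{Z})^\times$ closed under multiplication, is automatically a subgroup, so ratios of elements of $S$ again lie in $S$. The plan is then to prove by strong induction on $k$ that every integer $k$ with $1 \leq k \leq p-1$ lies in $S$; once this is established, $S$ contains every nonzero residue.

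The workhorse is the telescoping identity
\[
\frac{\binom{2d+2}{d+1}}{\binom{2d}{d}} = \frac{2(2d+1)}{d+1},
\]
which shows that $2(2d+1)/(d+1) \in S$ whenever $0 \leq d \leq (p-3)/2$, since both $\binom{2d}{d}$ and $\binom{2d+2}{d+1}$ are then among the given generators. For the base cases I would take $d = 0$ and $d = 1$ to obtain $1, 2 \in S$, and divide $\binom{4}{2} = 6$ by $\binom{2}{1} = 2$ to deduce $3 \in S$; the last step uses the hypothesis $p \geq 5$.

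For the inductive step, assume $\{1, 2, \ldots, k\} \subseteq S$ with $3 \leq k \leq p-2$, and split on the parity of $k+1$. If $k+1 = 2m$ is even, then $m = (k+1)/2 \leq k$, so $m \in S$ by hypothesis and $k+1 = 2m \in S$. If $k+1 = 2m+1$ is odd, then $k$ is even; since $p-1$ is even, necessarily $k+1 \leq p-2$, so $m = k/2 \leq (p-3)/2$ and the ratio $2(2m+1)/(m+1)$ lies in $S$. Both $m+1 = (k+2)/2 \leq k$ and $2$ are in $S$ by hypothesis, so multiplying this ratio by $(m+1)/2$ produces $2m+1 = k+1 \in S$.

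I do not anticipate any serious obstacle. The only point requiring care is the index bookkeeping in the odd case, namely checking that $m+1 \leq k$ (so the induction hypothesis gives $m+1 \in S$) and that $m \leq (p-3)/2$ (so the telescoping ratio is available); both inequalities follow immediately from $k+1$ being odd and at most $p-1$. Once the induction is complete, every nonzero residue modulo $p$ lies in $S$, which proves the lemma.
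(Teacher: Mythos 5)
Your proof is correct; I checked the index bookkeeping and it all holds (in particular $m+1=(k+2)/2\leq k$ needs $k\geq 2$, which your base case $k\geq 3$ supplies, and $k+1\leq p-2$ in the odd case follows from parity as you say). It is, however, a genuinely different route from the paper's. The paper first reduces to showing that every prime $q<p-1$ lies in $S$, then inducts on primes: it uses the single generator $\binom{q+1}{(q+1)/2}=qr$, where $r$ is a product of primes strictly smaller than $q$, so that $q=\binom{q+1}{(q+1)/2}r^{-1}\in S$ by the inductive hypothesis. You instead run a strong induction over all integers $1\leq k\leq p-1$, handling even $k+1$ by doubling and odd $k+1$ via the telescoping ratio $\binom{2d+2}{d+1}/\binom{2d}{d}=2(2d+1)/(d+1)$ of consecutive generators. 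Your version is arguably more self-contained: the paper's key step quietly relies on the fact that $\binom{q+1}{(q+1)/2}$ contains $q$ to the first power and has no other prime factor $\geq q$, which deserves its own (short) justification, whereas your identity is an explicit one-line computation. The paper's version is shorter once that fact is granted, since it only needs to reach the primes below $p-1$. Both arguments use the same underlying closure property --- that $S$, being a nonempty multiplicatively closed subset of the finite group $(\mathbb{Z}/p\mathbb{Z})^{\times}$, is closed under inverses; the paper phrases this as $c^{-1}=c^{p-2}$, you as the standard subgroup criterion.
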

\begin{proof}
Since $S$ is multiplicatively closed it is enough to show that $S$ contains all primes $q < p-1$. We observe that if $c \in S$ then $c^{-1} \mod p = c^{p-2} \mod p$ is also in $S$. We proceed by induction on the set of primes. Firstly, $1 = \binom{0}{0}$ and $2 = \binom{2}{1} \in S$. Let $q < p-1$ be prime. Then $\frac{q+1}{2} \leq \frac{p-1}{2}$ and
\bigskip
$$
\binom{q+1}{\frac{q+1}{2}} \in S
$$
with 
$$
\binom{q+1}{\frac{q+1}{2}} = q r
$$
\bigskip

where $r$ is the product of primes strictly less than $q$. Then by induction $r \in S$ and by the observation above $r^{-1} \in S$. So
$$
q =  \binom{q+1}{\frac{q+1}{2}} r^{-1} \in S.
$$
\end{proof}
\bigskip
\begin{corollary}
For any prime $p \geq 5$, the Catalan numbers have no forbidden residue modulo $p$.
\end{corollary}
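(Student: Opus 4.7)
The plan is to combine Lemma \ref{lemma1} with the observation, established in the paragraphs preceding the lemma, that every element of $S$ appears as $C_n \bmod p$ for some $n$, via the identification of elements of $S$ with constant states of the automaton for $C_n \bmod p$. Since Lemma \ref{lemma1} shows that $S$ exhausts the non-zero residues modulo $p$, every non-zero residue is realised as $C_n \bmod p$ for some $n$.

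The only remaining task is to produce a single index $n$ with $C_n \equiv 0 \pmod{p}$. I would do this by a direct Kummer-theorem calculation applied to $C_n = \binom{2n}{n}/(n+1)$. A convenient choice is $n = (p+1)/2$: then $2n = p+1$, so adding $n$ to itself in base $p$ produces exactly one carry, giving $v_p\!\left(\binom{2n}{n}\right) = 1$. Since $n+1 = (p+3)/2$ is a unit modulo $p$ whenever $p \geq 5$, we conclude $v_p(C_n) = 1$, so $p \mid C_{(p+1)/2}$ and $0$ is attained as well.

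There is no real obstacle: all the substantive work is already done by Lemma \ref{lemma1} and the earlier reduction to constant states of the automaton, so the corollary follows by combining these inputs with the single explicit divisibility above. If a more uniform argument is wanted, one could instead note that $\binom{2n}{n}$ is divisible by $p$ for any $n$ whose base-$p$ expansion has at least one digit exceeding $(p-1)/2$, and choose such an $n$ with $p \nmid n+1$; but for the purposes of the corollary, the single value $n=(p+1)/2$ suffices.
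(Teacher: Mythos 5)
Your proposal is correct. The first half coincides with the paper's argument: Lemma \ref{lemma1} plus the earlier identification of elements of $S$ with constant states of the automaton shows every non-zero residue occurs. Where you diverge is in handling the residue $0$. The paper simply cites its companion paper for the fact that $\{ n : C_n \equiv 0 \bmod p \}$ has asymptotic density $1$, whereas you give a self-contained elementary witness: for $n = (p+1)/2$ the single base-$p$ carry in $n+n = p+1$ gives $v_p\bigl(\binom{2n}{n}\bigr) = 1$ by Kummer's theorem, and $n+1 = (p+3)/2$ lies strictly between $1$ and $p$ for $p \geq 5$, so $p \mid C_{(p+1)/2}$. This checks out (e.g.\ $p=5$ gives $C_3 = 5$; $p=7$ gives $C_4 = 14$), and it has the virtue of not depending on an external result. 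What the paper's route buys instead is the much stronger statement that $0$ is attained on a density-one set, which feeds into the later observation that every residue is attained infinitely often; your explicit witness gives only one $n$, though your closing remark about digits exceeding $(p-1)/2$ would recover infinitude of such $n$ if needed. For the corollary as stated, either route suffices.
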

\begin{proof}
Lemma \ref{lemma1} shows that all non-zero residues appear in $C_n \mod p$. In addition, the values of $\; n: C_n \equiv 0 \mod p \;$ are plentiful, having asymptotic density $1$ (see \cite{1701.02975}).
\end{proof}

\bigskip
\begin{corollary}
\label{decomp}
For $p$ prime any $n \in \mathbb{N}$ can be written as
$$
n = \prod_i \binom{2d_i}{d_i} \mod p
$$
for suitable choices of $\{ d_i \in \mathbb{N} \}$.
\end{corollary}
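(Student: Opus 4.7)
The plan is to reduce to Lemma \ref{lemma1} and to handle divisibility by $p$ as a separate case.

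First, I would treat the case $p \nmid n$. Here $n \bmod p$ lies in $(\mathbb{Z}/p\mathbb{Z})^{\times}$, so by Lemma \ref{lemma1} it belongs to $S$, which by definition of $S$ means that $n \equiv \prod_i \binom{2d_i}{d_i} \bmod p$ for some choice of $d_i$ with $0 \leq d_i \leq (p-1)/2$. This step is essentially a restatement of the lemma.

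Next, I would handle the case $p \mid n$. Since the product representation is only required modulo $p$, it suffices to exhibit a single $d$ with $\binom{2d}{d} \equiv 0 \bmod p$, because then that factor alone forces the whole product to vanish. By Kummer's theorem, $v_p\binom{2d}{d}$ equals the number of carries when $d$ is added to itself in base $p$, so any $d$ whose base-$p$ digits produce at least one carry upon doubling will do. A minimal concrete choice is $d = (p+1)/2$ for odd $p$ (where $d + d = p + 1$ contributes exactly one carry) and $d = 1$ for $p = 2$; in either case, $\binom{2d}{d} \equiv 0 \equiv n \bmod p$, and one may pad with factors of $\binom{0}{0} = 1$ as needed.

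I would not anticipate any serious obstacle: the corollary is essentially a cosmetic consequence of Lemma \ref{lemma1} together with the elementary observation that some $\binom{2d}{d}$ is divisible by $p$. The only point requiring a moment's care is remembering to treat $n \equiv 0$ explicitly, since the multiplicatively closed set $S$ by construction lies inside $(\mathbb{Z}/p\mathbb{Z})^{\times}$ and so contains no zero residue; hence the zero case cannot be obtained from Lemma \ref{lemma1} alone and must be supplied by a $d > (p-1)/2$.
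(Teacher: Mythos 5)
Your proof is correct and follows essentially the same route as the paper: for $p \nmid n$ the decomposition is exactly the content of Lemma \ref{lemma1}, and the paper's proof simply says to repeat that lemma's inductive argument. The one place you go beyond the paper is the case $p \mid n$, which the paper's one-line proof does not explicitly address; your observation that $\binom{2d}{d} \equiv 0 \pmod p$ for $d = (p+1)/2$ (one carry in base $p$, or simply because $p$ divides $(p+1)!$ but not $\bigl((p+1)/2\bigr)!^{2}$) correctly supplies the zero residue, which indeed cannot come from $S \subseteq (\mathbb{Z}/p\mathbb{Z})^{\times}$ and so does need a separate argument with some $d > (p-1)/2$.
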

\begin{proof}
The same inductive argument as in Lemma \ref{lemma1} can be used to prove the corollary. The choice of $\{ d_i \}$ is not necessarily unique.
\end{proof}  

\bigskip

The set of states and transitions for the automata of $C_n \mod p$ is a directed graph with the states as vertices and transitions as directed edges. The sub-graph $G$ consisting of the non-zero constant states and transitions between them is also a directed graph.
\bigskip

\begin{corollary}
\label{trans}
The directed graph $G$ formed by the non-zero constant states and transitions has a cycle which visits all vertices in $G$.
\end{corollary}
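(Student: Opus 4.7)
The plan is to use Corollary \ref{decomp} to exhibit a closed directed walk in $G$ passing through every vertex. The vertices of $G$ are, by Lemma \ref{lemma1}, the non-zero residues modulo $p$, and a directed edge from $u$ to $v$ exists precisely when $v \equiv u \binom{2d}{d} \pmod{p}$ for some $0 \leq d \leq (p-1)/2$. Thus $G$ is a Cayley-type digraph on $(\mathbb{Z}/p\mathbb{Z})^{\times}$ with connection set $T = \{\binom{2d}{d} \bmod p : 0 \leq d \leq (p-1)/2\}$, and Corollary \ref{decomp} is exactly the statement that $T$ multiplicatively generates this group.

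First I would enumerate the vertices in any fixed order as $v_1 = 1, v_2, \ldots, v_{p-1}$. For each consecutive pair $(v_i, v_{i+1})$ with indices taken modulo $p-1$, I apply Corollary \ref{decomp} to the ratio $v_{i+1} v_i^{-1} \bmod p$ to write it as a product $\prod_j \binom{2 d_{i,j}}{d_{i,j}}$ of central binomials from $T$. Each factor corresponds to a single edge of $G$, and every partial product (starting from $v_i$) is a non-zero residue modulo $p$, hence again a vertex of $G$. So the factorisation traces out a directed walk in $G$ from $v_i$ to $v_{i+1}$. Concatenating the $p-1$ walks yields a closed walk from $v_1$ back to $v_1$ that meets every vertex of $G$, which is the cycle asserted by the corollary.

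The main point to pin down is the intended meaning of \emph{cycle}. Under the natural reading (a closed walk visiting every vertex), the argument above completes the proof. If instead a simple, Hamiltonian cycle is required, the same setup reduces the question to Hamiltonicity of a generating Cayley digraph on the cyclic group $(\mathbb{Z}/p\mathbb{Z})^{\times}$, for which classical results suffice but a separate citation would be needed. I expect disambiguating this to be the only real obstacle; in either case Corollary \ref{decomp} provides the essential content, and no further number-theoretic input is required beyond what the excerpt has already established.
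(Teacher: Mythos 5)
Your proposal is correct and matches the paper's own argument: both use Corollary \ref{decomp} to factor the ratio of two constant states into central binomial coefficients, identify each factor with a single transition (the paper phrases this via $\Lambda_{d,d}(c*Q^{p-1}) = c\binom{2d}{d}$), and thereby obtain a directed path between any two vertices, from which the closed walk through all vertices follows by concatenation. Your reading of ``cycle'' as a closed walk visiting every vertex is also the one the paper intends, since its construction likewise produces a walk that may repeat vertices.
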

\begin{proof}
Let $c_1$ and $c_2$ be two constant states. It is enough to show that there is a directed state path from $c_1$ to $c_2$. Firstly, $c_2 c_1^{-1}$ is also a constant state by the multiplicative closure of the set $S$. From corollary~\ref{decomp} there are $\{ d_i \}$ such that
$$
c_2 c_1^{-1} = \prod_i \binom{2d_i}{d_i} \mod p.
$$
Then since for constants $c$
$$
\Lambda_{d, d} (\, c*Q^{p-1} \,) = c \Lambda_{d, d} (\, 1*Q^{p-1} \,) = c \binom{2d}{d},
$$
we have modulo $p$
\begin{equation*}
\label{prod}
\biggr(\, \prod_i  \Lambda_{d_i, d_i} \biggr)\,  (\, c_1*Q^{p-1} \,) = c_1 \prod_i \binom{2d_i}{d_i} = c_2 \mod p.
\end{equation*}

Since the application of each $\Lambda_{d_i, d_i}$ corresponds to a transition between states, the product of the $\Lambda_{d_i, d_i}$ corresponds to a directed path from $c_1$ to $c_2$.
\end{proof}  

\bigskip
\begin{observation}
Each residue is attained infinitely often. Firstly from \cite{1701.02975} numbers $n$ which have base $p$ representations containing only digits from the set $\{ 0, 1, ... , \frac{p-1}{2} \}$ have a state path which ends at a non-zero constant state. Since $C_n \mod p$ is the value of the end state of the state path for $n$, it is non-zero $\mod p$. So at least one non-zero constant state (and so at least one non-zero residue) is attained infinitely often. Secondly, the existence of a cycle in the directed graph of the constant states shows that all non-zero constant states are visited infinitely often.
\end{observation}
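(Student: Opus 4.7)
The plan is to split the proof into cases on whether the target residue is zero, and to use the structural results already in hand (Lemma \ref{lemma1} and Corollaries \ref{decomp}, \ref{trans}) to handle the non-zero case.

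For the residue $0$ there is nothing new to do: the earlier corollary (and \cite{1701.02975}) already record that $\{n : C_n \equiv 0 \pmod p\}$ has asymptotic density $1$, so in particular $0$ is attained infinitely often.

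For a non-zero target residue $r$ I would proceed in two steps. \emph{Step 1: show that at least one non-zero constant state is attained infinitely often.} The set of $n \in \mathbb{N}$ whose base-$p$ expansion uses only digits from $\{0,1,\ldots,(p-1)/2\}$ is infinite. For each such $n$ the corresponding state path is a composition of operators $\Lambda_{d,d}$ with $d \leq (p-1)/2$, and each such operator sends a constant $c$ to $c\binom{2d}{d}$, which remains in the multiplicatively closed set $S$ and is non-zero modulo $p$. Hence $C_n \bmod p \in S$ for every such $n$. Since $S$ is finite, pigeonhole yields a fixed $c^\ast \in S$ with $C_n \equiv c^\ast \pmod p$ for infinitely many $n$. \emph{Step 2: bootstrap from $c^\ast$ to every non-zero residue via Corollary \ref{trans}.} By Lemma \ref{lemma1} any non-zero target residue $c_2$ is itself a constant state, so Corollary \ref{trans} provides a directed path in $G$ from $c^\ast$ to $c_2$, realised as a finite composition $\prod_i \Lambda_{d_i,d_i}$. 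For each $n$ with $C_n \equiv c^\ast$, I would append the digit string $(d_i)$ to the base-$p$ representation of $n$ in whichever order matches the automaton's reading convention; the resulting integer $m$ has a state path that first reaches $c^\ast$ and then is driven along the fixed path to $c_2$, so $C_m \equiv c_2 \pmod p$. Distinct $n$ give distinct $m$, hence $c_2$ is attained infinitely often.

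The main obstacle I anticipate is purely bookkeeping: aligning the direction in which the automaton of \cite{1701.02975} ingests the digits of $n$ with the order in which the operators $\Lambda_{d_i,d_i}$ are composed in Corollary \ref{trans}, so that appending digits on one end of the base-$p$ string genuinely corresponds to post-composing transitions on the state reached by the original $n$. Once that convention is fixed, no new machinery beyond Lemma \ref{lemma1} and Corollaries \ref{decomp} and \ref{trans} is required.
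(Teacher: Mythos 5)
Your proposal is correct and follows essentially the same route as the paper: restricted-digit integers force the end state into the finite set $S$ (whence one constant state recurs infinitely often), and Corollary \ref{trans} then propagates this to every non-zero constant state. You are merely more explicit than the paper in two harmless ways --- you make the pigeonhole step and the digit-appending mechanism behind ``visited infinitely often'' explicit, and you separately record that the residue $0$ is attained infinitely often via its density-$1$ set of indices.
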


\bibliographystyle{plain}
\begin{small}
\bibliography{ref}
\end{small}

\end{document}